\newtheorem{theorem}{Theorem}
\newtheorem{definition}[theorem]{Definition}
\newtheorem{conjecture}{Conjecture}
\newtheorem{observation}[theorem]{Observation}
\newtheorem{lemma}[theorem]{Lemma}
\newcommand{\gen}[1]{\ensuremath{\langle #1\rangle}}
\definecolor{light-gray}{gray}{0.95}
\newcolumntype{L}{>{$}l<{$}}
\global\long\def\Aut{\operatorname{Aut}}
\DeclareMathOperator*{\argmax}{arg\,max}
\DeclareMathOperator*{\argmin}{arg\,min}
\newcommand{\Z}{\mathbb{Z}} 
\newcommand{\calL}{\mathcal{L}} 
\newcommand{\calP}{\mathcal{P}} 
\newcommand{\calF}{\mathcal{F}}
\newcommand{\calB}{\mathcal{B}}
\newcommand{\Supp}{\mathrm{Supp}}
\newcommand\sq{\mathbin{\text{\scalebox{.84}{$\square$}}}}
\title{Bilevel Programming for Pebbling Numbers of Lemke Graph Products}
\author{
Jonad Pulaj\\
\texttt{jopulaj@davidson.edu}\\
Davidson College
\and
Kenan Wood\thanks{Corresponding author}\\
\texttt{kewood@davidson.edu}\\
Davidson College
\and
Carl Yerger\\
\texttt{cayerger@davidson.edu}\\
Davidson College
}
\date{}
\begin{document}

\maketitle

\begin{abstract}
    Given a configuration of indistinguishable pebbles on the vertices of a graph, a pebbling move consists of removing two pebbles from one vertex and placing one pebble on an adjacent vertex. The \emph{pebbling number} of a graph is the least integer such that any configuration with that many pebbles and any target vertex, some sequence of pebbling moves can place a pebble on the target. Graham's conjecture asserts that the pebbling number of the cartesian product of two graphs is at most the product of the two graphs' pebbling numbers. Products of so-called \emph{Lemke graphs} are widely thought to be the most likely counterexamples to Graham's conjecture, provided one exists.

    In this paper, we introduce a novel framework for computing pebbling numbers using bilevel optimization. We use this approach to algorithmically show that the pebbling numbers of all products of 8-vertex Lemke graphs are consistent with Graham's conjecture, with the added assumption that pebbles can only be placed on a set of at most four vertices.
\end{abstract}

\section{Introduction}\label{Sec:intro}
Graph pebbling consists of two-player games on graphs that involve extremal configurations of indistinguishable pebbles on the vertices that are subject to \emph{pebbling moves}, where two pebbles are removed from one vertex and one pebble is placed on an adjacent vertex. In the pebbling game, one player selects some small number of pebbles, the adversary responds by distributing the selected number of pebbles onto the vertices of a graph and choosing a target vertex, and the first player applies a sequence of pebbling moves to place a pebble to the target vertex; the first player wins if he succeeds at getting a pebble to the target. 
The \emph{pebbling number} of a graph is the minimum number of pebbles such that the first player always has a winning sequence of pebbling moves, regardless of the adversary's actions.

This topic originated from the following zero-sum problem in number theory: for any set of $n$ integers, is there
always a finite subset $S$ whose sum is $0$ modulo $n$ that satisfies $\sum_{s \in S} \gcd(s, n) \le n$?
Kleitman and Lemke \cite{addition-theorem-mod-n} showed that the answer to this question is ``yes,'' and Chung \cite{chung} later showed the equivalence between Kleitman and Lemke's technique and graph pebbling.
However, pebbling has since become a rich source of interesting graph theoretic questions with a well-established literature. We refer the reader to Hurlbert \cite{hurlbert2013graph} for an excellent survey on the topic. 

Graph pebbling can also be interpreted as a network optimization problem in which resources (fuel, energy) are used up in transport: a pebbling move consumes one unit of supply (a pebble) from the total in order to move the supply across an edge. 
Thus the problem becomes to determine the least amount of resources so that it is always possible to move anywhere else in the network, regardless of the initial distribution of resources. This fundamental network flow problem is equivalent to the notion of the pebbling number of a graph.

\subsection{Notation and Terminology}
We use the following notational conventions throughout this paper. Unless otherwise stated, all graphs are simple and connected, without loops or multiple edges.
A \emph{pebbling configuration} (or \emph{distribution}) on a graph $G$ is a map $p: V(G) \to \Z_{\ge 0}$; intuitively, for $v \in V(G)$, $p(v)$ is the number of pebbles on vertex $v$.
A \emph{pebbling move} on $p$ consists of removing two pebbles from one vertex and placing one an adjacent vertex; if $p'$ is the resulting configuration, we often write $p \to p'$. Sequences of pebbling moves starting from an initial configuration are often denoted by the corresponding sequence of directed edges traversed.
The \emph{support} of $p$, denoted $\Supp(p)$, is the set $\{v \in V(G): p(v) > 0\}$.
The \emph{size} of $p$, denoted $|p|$, is defined as $\sum_{v \in V(G)} p(v)$.

If for some $r\in V(G)$, there exists a sequence of pebbling moves $p = p_0 \to p_1 \to \cdots \to p_k$ such that $p_k(r) \ge 1$, we say that $p$ is \emph{$r$-solvable}; otherwise, $p$ is \emph{$r$-unsolvable}. 
Given any root vertex $r \in V(G)$, the \emph{$r$-rooted pebbling number} of $G$, denoted $\pi(G, r)$, is the least integer $m$ such that any pebbling configuration of size $m$ is $r$-solvable. We define the \emph{pebbling number} of $G$ as $\pi(G) = \max_{r \in V(G)} \pi(G, r)$. 

\subsection{Context and Graham's Conjecture}
Of significant interest in graph pebbling is a well-established conjecture of Graham that traces back to the origins of the subject. If $G$ and $H$ are graphs, we denote the Cartesian product of $G$ and $H$ by $G \sq H$, defined with vertex set $V(G) \times V(H)$ and edge set 
\[
\{\{(u,v), (x,y)\} \subseteq V(G) \times V(H): (u = x \wedge vy \in E(H)) \vee (v = y \wedge ux \in E(G)\}.
\]
\begin{conjecture}
    [Graham \cite{chung}] If $G$ and $H$ are graphs, then $\pi(G \sq H) \le \pi(G)\pi(H)$.
\end{conjecture}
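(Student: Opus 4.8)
The plan is to attack Graham's conjecture through the standard projection-and-collapse strategy, treating $G \sq H$ as $|V(H)|$ copies of $G$ (one ``column'' $G_h$ for each $h \in V(H)$) stitched together along $H$-edges. Fix a target $r = (r_G, r_H)$ and a configuration $D$ on $G \sq H$ with $|D| = \pi(G)\pi(H)$, and for each column $h$ let $D_h$ denote the restriction of $D$ to $G_h$, viewed as a configuration on $G$. I would first solve \emph{locally} inside each column toward the row $r_G$: define a projected configuration $\bar D$ on $H$ by letting $\bar D(h)$ be the maximum number of pebbles that can be moved to $(r_G, h)$ using pebbling moves confined to $G_h$. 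If I can show $\bar D$ is $r_H$-solvable in $H$, then composing the column-solutions that produce $\bar D$ with an $H$-solution in the copy of $H$ sitting over $r_G$ places a pebble on $r$, and we are done. Since $\bar D$ is $r_H$-solvable as soon as $|\bar D| \ge \pi(H, r_H)$, and $\pi(H, r_H) \le \pi(H)$, the entire argument reduces to the single inequality $|\bar D| \ge \pi(H)$.

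The second step is therefore to lower-bound $|\bar D| = \sum_{h} \bar D(h)$ in terms of $|D| = \sum_h |D_h|$. The key enabling tool is a quantitative ``many-pebble'' bound inside $G$: I want to argue that a configuration of size $s$ on $G$ can push at least roughly $s/\pi(G)$ pebbles to $r_G$, so that summing over columns yields $|\bar D| \ge |D|/\pi(G) = \pi(H)$. The precise form of such a bound is the \emph{two-pebbling property} and its $t$-pebble generalizations: a graph has it when any configuration can deliver $t$ pebbles to a target whenever its size exceeds $t\,\pi(G)$ by a controlled correction depending on $|\Supp(\cdot)|$. I would first verify the conjecture in the clean case where both $G$ and $H$ enjoy the two-pebbling property, mirroring Chung's argument for products of paths, by bookkeeping the support-dependent corrections carefully so that the per-column deficits sum to something the collapse in $H$ can still absorb.

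The hard part --- and the reason the conjecture remains open --- is precisely that the two-pebbling property fails for some graphs, the Lemke graph being the canonical example. When a column $G_h$ fails two-pebbling, the clean bound $\bar D(h) \gtrsim |D_h|/\pi(G)$ can be violated: pebbles pile up on the ``wrong'' vertices of $G_h$ in a way that wastes exactly the slack the collapse argument needs, so $\bar D$ can fall short of $\pi(H)$ even though $|D| = \pi(G)\pi(H)$. Overcoming this would require either a non-greedy routing that refuses to decouple the column-solving phase from the row-solving phase, or a global weight-function/tree-strategy certificate on $G \sq H$ that never funnels through a single bottleneck row. I expect this coupling to be the genuine obstacle, and it is exactly the regime --- products of small Lemke graphs --- that the remainder of this paper addresses computationally via bilevel optimization rather than by a general structural argument.
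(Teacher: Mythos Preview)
The statement you were asked to prove is Graham's \emph{conjecture}: it is open, and the paper does not prove it. There is therefore no ``paper's own proof'' to compare against. The paper states the conjecture, surveys partial results (products of paths, cycles, fans, wheels, and products where one factor has the $2$-pebbling property), and then restricts attention to the computational special case of Theorem~\ref{Thm:main}.

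Your proposal is not a proof, and to your credit you say so explicitly. What you have written is an accurate outline of the standard column-collapse argument together with an honest diagnosis of why it fails. Two concrete gaps worth naming: first, the inequality $\bar D(h) \ge |D_h|/\pi(G)$ is simply false in general (not just ``can be violated''), and even for graphs with the $2$-pebbling property the correct bound carries support-dependent error terms that do not sum to zero without further hypotheses on $H$; second, your final paragraph gestures at ``non-greedy routing'' and ``global weight-function certificates'' but supplies no candidate construction, so the proposal terminates exactly where the open problem begins. This is a reasonable survey of the obstruction, but it does not advance toward a proof, and there is nothing here to grade against the paper since the paper makes no claim to have one.
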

Graham's conjecture has been proven for products of paths \cite{chung}, products of cycles \cite{products-of-cycles-1, Snevily2000}, products of fan and wheel graphs \cite{Snevily2000}, and some products of graphs in which one has the so-called \emph{2-pebbling property} \cite{products-of-cycles-1,Snevily2000,Wang2009}.

One of the major obstacles towards progress on Graham's conjecture is the lack of computationally tractable tools.
Computing pebbling numbers is extremely computationally difficult; Milans and Clark \cite{milans-clark} shows that deciding the language $\{\gen{G, k}: \pi(G) \le k\}$ is $\Pi_2^\mathtt{P}$-complete.  Watson \cite{Watson2005} shows that even determining if a configuration of pebbles if $r$-solvable is \texttt{NP}-complete. This computational intractability appears to be a major barrier, since we are unable to find pebbling numbers and extract patterns for moderately large graphs. Nonetheless, there have been recent efforts towards designing more efficient algorithms and techniques to bound pebbling numbers. Cusack, Green, and Powers \cite{lemke-classification-9} give algorithms for pebbling numbers and related problems, classifying all 8- and 9-vertex graphs without the \emph{2-pebbling property} described below. Hurlbert \cite{Hurlbert2010}, Cranston et al. \cite{Cranston2017}, Kenter and Skipper \cite{Kenter2018}, and Flocco, Pulaj, and Yerger \cite{Flocco-Pulaj-Yerger} have used techniques from discrete linear optimization for graph pebbling, yielding strong bounds. Despite this line of computational exploration, pebbling is still largely intractable. 

However, many classes of graphs are particularly well-behaved with respect to pebbling. A graph $G$ satisfies the \emph{2-pebbling property} if for every configuration $p$ with size at least $2\pi(G)-|\Supp(p)|+1$, there is a sequence of pebbling moves starting at $p$ that yields at least 2 pebbles on any specified root vertex. These graphs have nice properties with respect to Graham's conjecture; for example, if $G$ has the 2-pebbling property and $T$ is a tree, then $\pi(G \sq T) \le \pi(G) \pi(T)$ \cite{Snevily2000}.

Graphs that do not posses the 2-pebbling property are called \emph{Lemke graphs}. Although certain small Lemke graphs behave well with trees and complete graphs, as shown by Gao and Yin \cite{Gao-Yin-2017-Lemke}, little is known about general Lemke graphs. Towards enumeration, Cusack, Green, and Powers show that the smallest Lemke graph is on 8 vertices, and there are 3 distinct \emph{minimal} Lemke graphs on 8 vertices \cite{lemke-classification-9}. It is not difficult to show that all of these Lemke graphs on 8 vertices are \emph{Class 0}, where a graph is Class 0 if its pebbling number equals it number of vertices. 

\subsection{Contributions}\label{Subsec:conributions}
Given the lack of understanding of Lemke graphs in relation to Graham's conjecture, it is generally believed that products of Lemke graphs are the most likely counterexample if one exists. As such, the 8-vertex Lemke graphs have attracted a large amount of attention, compared to other graphs \cite{lemke-classification-9, Flocco-Pulaj-Yerger, Hurlbert2010,Kenter2018,kenter-computing-bounds}. In this paper, we add to this line of inquiry. The main result in this paper suggests strong evidence that all products of 8-vertex Lemke graphs are consistent with Graham's conjecture.

The specific contributions of this paper are two-fold.
\begin{enumerate}
    \item We give a formulation of computing pebbling numbers via \emph{bilevel integer linear programming}.
    To our knowledge, this is the first method in graph pebbling that uses techniques from bilevel programming. Our method appears to be (experimentally) much more efficient than any other known algorithm for computing pebbling numbers.
    \item We use this bilevel framework to prove that Graham's conjecture holds for all products of 8-vertex Lemke graphs, when the supports of pebbling configurations are restricted to at most 4 vertices (formalized in Theorem \ref{Thm:main}). While all known results on these graphs focus on bounding the pebbling number---which does not provide much evidence towards consistency with Graham's conjecture---this result provides substantial evidence that, indeed, $\pi(L' \sq L'') \le \pi(L') \pi(L'')$ for all 8-vertex Lemke graphs $L'$ and $L''$.
\end{enumerate}

Our implementation\footnote{\href{https://github.com/KenanWood/Bilevel-Programming-for-Pebbling-Numbers}{https://github.com/KenanWood/Bilevel-Programming-for-Pebbling-Numbers}} uses the bilevel solver from \cite{bilevel-solver-github,bilevel-solver}.
This bilevel mixed-integer linear programming solver is state-of-the-art and makes use of structural cuts that perform very well in practice in comparison to other known implementations.
Experimentally, our bilevel programming approach together with the implementation lends itself particularly well to candidate Class 0 graphs, which describes the products of 8-vertex Lemke graphs considered here. 

Let us now formally state our main result, Theorem \ref{Thm:main}, after stating the following definitions.
For a subset $S \subseteq V(G)$ and a root $r \in V(G)$, define $\pi_S(G, r)$ to be the minimum number $m$ such that any distribution of size $m$ with a support that is a subset of $S$ is $r$-solvable. For $k \ge 1$, we define 
\[
\pi_k(G, r) = \max_{S \in \binom{V(G)}{k}} \pi_S(G, r)
\]
and 
\[
\pi_k(G) = \max_{r \in V(G)} \pi_k(G, r).
\]
That is, $\pi_k(G)$ is the least integer such that all pebbling configurations with support of size at most $k$ having size $\pi_k(G)$ can reach any target vertex via a sequence of pebbling moves.
The quantity $\pi_k(G, r)$ is called the \emph{$r$-rooted support-$k$ pebbling number} of $G$, and $\pi_k(G)$ is called the \emph{support-$k$ pebbling number} of $G$.

Let $\calL$ be the set of all Lemke graphs on 8 vertices, up to isomorphism. Note that $|\calL| = 22$, and there are exactly three minimal Lemke graphs in $\calL$ \cite{lemke-classification-9}; all 8-vertex Lemke graphs contain one of the three minimal ones. 
\begin{theorem}\label{Thm:main}
    For all $L', L'' \in \calL$, we have $\pi_4(L' \sq L'') \le \pi(L') \pi(L'')$.
\end{theorem}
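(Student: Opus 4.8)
The plan is to reduce Theorem~\ref{Thm:main} to a finite collection of computational certificates, one for each relevant triple $(L', L'', r)$, and to produce each certificate via the bilevel integer programming framework announced in the contributions. First I would observe that since $|\calL| = 22$, there are only $\binom{22}{2} + 22 = 253$ unordered pairs $(L', L'')$ to consider (using that $L' \sq L'' \cong L'' \sq L'$), and for each such product graph there are $64$ choices of root $r \in V(L' \sq L'')$; by vertex-transitivity considerations within each Lemke graph one can typically cut this down further, but even the naive bound gives a finite problem. For a fixed pair and root, the claim $\pi_4(L' \sq L'', r) \le \pi(L')\pi(L'')$ is equivalent to the assertion that \emph{every} distribution $p$ on $V(L' \sq L'')$ with $|\Supp(p)| \le 4$ and $|p| = \pi(L')\pi(L'')$ is $r$-solvable. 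The values $\pi(L')$ and $\pi(L'')$ are each equal to $8$ (all 8-vertex Lemke graphs are Class~0), so the target size is $64$ in every case; this is a fixed, known constant, which is what makes the search tractable.

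The core of the argument is the bilevel formulation. The outer (adversarial) player chooses the distribution $p$ subject to $\sum_v p(v) = 64$, $p \ge 0$ integral, and a support constraint $|\Supp(p)| \le 4$ (encoded with four binary indicator variables and a coupling $p(v) \le 64\, z_v$, $\sum_v z_v \le 4$), seeking to \emph{maximize} the adversary's ability to keep $r$ unreachable. The inner (solver) player, given $p$, chooses a sequence of pebbling moves — modeled as nonnegative integer flow variables on the arcs of $L' \sq L''$ respecting the standard pebbling conservation inequalities (each unit of pebble arriving at a vertex either stays or feeds a move out, and two outgoing units are needed per incoming unit) — to \emph{maximize} the number of pebbles delivered to $r$. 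Theorem~\ref{Thm:main} holds for the triple exactly when the optimal value of this bilevel program is $\ge 1$, i.e. the adversary cannot force the inner objective below $1$. I would run the bilevel MIP solver of \cite{bilevel-solver-github,bilevel-solver} on each of the finitely many instances; a successful run returns a proof that no $r$-unsolvable support-$4$ distribution of size $64$ exists, while any instance where the solver returns an optimal adversary distribution with inner value $0$ would be an explicit counterexample (and hence a refutation of Graham's conjecture for that product). One must also verify the soundness of the MIP encoding: that the inner flow model faithfully captures pebbling solvability (this is the now-standard ``tree strategy / flow'' relaxation, which is exact for the \emph{reachability} question when one allows the flow to be acyclic, and here we want exact reachability, so care is needed to use an encoding that is neither an over- nor under-approximation — one way is to use the weight-function / collapsibility characterization, another is to bound path lengths by the diameter of the product graph and enumerate move-sequences of bounded length).

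The main obstacle I anticipate is twofold. The computational obstacle is scale: even though each instance has a fixed parameter $|p| = 64$, the product graph has $64$ vertices and the bilevel MIP has both a combinatorial outer feasible region (choices of a $4$-subset of $64$ vertices, times compositions of $64$ into $4$ parts) and an inner problem whose exact modeling of pebbling reachability is itself \texttt{NP}-hard; making all $\le 253 \times 64$ runs terminate in reasonable time is the real work, and this is presumably where the structural cuts of the chosen solver and problem-specific symmetry reductions (automorphisms of $L' \sq L''$, reducing to the three minimal Lemke graphs as building blocks, pruning dominated distributions) become essential. The theoretical obstacle, which I would want to pin down carefully, is guaranteeing that the inner optimization's optimal value being $\ge 1$ is genuinely equivalent to $r$-solvability rather than to a relaxation of it: if one uses a flow-based inner model, one must argue (or restrict the model so) that an integral flow of value $\ge 1$ to $r$ corresponds to an actual legal sequence of pebbling moves, which is true for acyclic pebbling flows but requires the standard lemma that any solvable distribution admits a tree-based solution. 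I would therefore structure the proof as: (i) state and cite/prove the equivalence between $r$-solvability and feasibility of the integer pebbling-flow model; (ii) state the bilevel program and prove that its optimal value $\ge 1$ iff $\pi_4(L'\sq L'', r) \le 64$; (iii) report, for each of the finitely many triples, the solver's certified optimal value, with the code and logs in the linked repository serving as the verifiable witness; (iv) conclude by taking the maximum over $r$ and over all pairs in $\calL$.
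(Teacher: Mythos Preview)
Your overall strategy matches the paper's: reduce Theorem~\ref{Thm:main} to a finite computation, model each instance as a bilevel integer program whose inner problem is an integer pebbling-flow model, and discharge the instances with the solver of \cite{bilevel-solver}. You also correctly isolate the one genuine theoretical obligation, namely that an optimal integer flow delivering at least one pebble to $r$ is equivalent to $r$-solvability; the paper proves precisely this as Lemma~\ref{Lem:lower-level-equiv} via the Acyclic Orderability Theorem, which is the ``acyclic flow $\Leftrightarrow$ legal move sequence'' lemma you anticipate.

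Where you diverge is in the encoding and, more importantly, in the reductions that make the computation actually terminate. First, the paper does not encode the support constraint with binary indicators inside one large bilevel program per root; instead it \emph{fixes} a support $S$ externally and solves a small program $PI_S(G,r)$ whose leader maximizes $|p|$ subject to the upper-level constraint that the follower's optimum is $0$, together with the Class-$0$ cut $|p|\ge |V(G)|$, and then simply tests infeasibility (Theorem~\ref{Thm:bilevel-equivalence}). This sidesteps the ambiguity in your leader objective (``maximize unreachability'' and then check the value is $\ge 1$). Second, the paper does not iterate over all $\binom{63}{4}$ supports either: it uses a greedy $(V(G)\setminus\{r\},8,\calF(G,r,4))$-covering design so that each $8$-vertex support certifies many $4$-subsets at once (Algorithm~\ref{Alg:covering}). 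Third, and this is the reduction you mention only in passing, the paper cuts the $253$ unordered pairs down to the $6$ products of the three \emph{minimal} Lemke graphs via the monotonicity $\pi_4(G)\le \pi_4(H)$ whenever $G\supseteq H$ on the same vertex set. The net effect is roughly $10^5$--$10^6$ tiny fixed-support instances at about one second each, versus your $\sim 10^4$ instances each of which internally searches over all $4$-subsets of a $63$-vertex set; both are logically sound, but only the paper's decomposition is reported to finish.
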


The rest of this paper is organized as follows. In Section \ref{Sec:bilevel}, we introduce the necessary bilevel programming prerequisites. Section \ref{Sec:pebbling-formulation} describes the bilevel formulation of the pebbling game, specifically for computing pebbling numbers. Section \ref{Sec:Alg} describes the algorithm we use to compute support-$k$ pebbling numbers (Algorithm \ref{Alg:covering}), utilizing the symmetry of product graphs (Subsection \ref{Subsec:symmetry}) and advantages of covering designs (Subsection \ref{Subsec:coverings}). Also in Section \ref{Sec:Alg}, we prove Theorem \ref{Thm:main}. In Section \ref{Sec:computations}, we elaborate on the computational details in the proof of Theorem \ref{Thm:main} and discuss future directions towards formal verification of the results. Finally, concluding remarks may be found in Section \ref{Sec:conclusion}.







\section{Bilevel Programming}\label{Sec:bilevel}
In this section, we give a brief overview of bilevel programming and some formal definitions used throughout the rest of this paper.

Bilevel programming is a class of optimization problems, in which a \emph{leader} and a \emph{follower} are both trying to optimize their respective (potentially conflicting) objective functions \cite{bilevel-2,bilevel-3,bilevel-solver}; we refer the reader to \cite{Kleinert2021-survey} for a comprehensive overview of bilevel programming. Specifically, the leader makes a choice for the \emph{upper-level} variables, and the follower responds by making a choice for the rest of the variables (\emph{lower-level}) that satisfies all of the lower-level constraints and optimizes the follower's objective function over the lower-level constraints. In this paper, we assume the \emph{optimistic} model for the follower's decision making, where the follower will choose an optimal solution for the follower subproblem (many optimal solutions often exist) that most benefits the leader, in terms of satisfying upper-level constraints and improving the leader's objective.
This is in contrast to the \emph{pessimistic} model, where follower acts adversarially against the leader \cite{bilevel-optimistic-pessimistic}.

Note the similarity between these bilevel programming games and the pebbling game. In a certain ``dual'' sense to the pebbling game described in the introduction, the leader can be seen as the one placing the pebbles and choosing the root vertex, attempting to maximize the number of pebbles used, while the follower attempts to make a sequence of pebbling moves to maximize the final number of pebbles on the root vertex; an unsolvable configuration of pebbles is then characterized by a feasible solution for the leader such that the optimal solution for the follower achieves no pebbles on the root. This connection underlies the approach described in this paper.

Now we make bilevel integer linear programming precise. 
A \emph{bilevel integer linear program} (BILP) is an optimization problem of the form
\[
\begin{aligned}
    \min_{x \in \mathbb{Z}^{n_u}, y \in \mathbb{Z}^{n_l}} & \quad c_1^\top x + c_2^\top y \\
    \text{s.t.} & \quad A_1 x + A_2 y \leq b_1, \\
               & \quad y \in \argmin_{\bar{y} \in \mathbb{Z}^{n_l}} \{ d^\top \bar{y} \;|\; A_3 x + A_4 \bar{y} \leq b_2 \},
\end{aligned}
\]
where $x \in \Z^{n_u}$ is the vector of upper-level ``leader'' variables, $y \in \Z^{n_l}$ is the vector of lower-level ``follower'' variables, and $c_1, c_2, A_1, A_2, b_1, d, A_3, A_4, b_2$ are real vectors and matrices of appropriate sizes. Of course, either objective can be a maximization or minimization, and constraints need not be exclusively $\le$ constraints, but all BILPs can be expressed in this form. Note that the ``optimistic'' paradigm of bilevel programming is captured directly in this formal definition.

In this BILP, $c_1^\top x + c_2^\top y$ is the \emph{leader objective function}, $d^\top \bar{y}$ is the \emph{follower objective function}, $A_1 x + A_2 y \leq b_1$ is the \emph{upper-level constraints}, and $A_3 x + A_4 \bar{y} \leq b_2$ is the \emph{lower-level constraints}.

A \emph{feasible solution} is a pair $(x,y) \in \Z^{n_u} \times \Z^{n_l}$ such that all of the upper-level and lower-level constraints are satisfied, and the follower objective function is optimized. A BILP is \emph{feasible} if it has some feasible solution, and \emph{infeasible} otherwise. An \emph{optimal solution} for a BILP is a feasible solution that optimizes the leader objective function over all such feasible solutions. The \emph{optimal objective value} of the above BILP is the quantity $c_1^\top x + c_2^\top y$, where $(x,y)$ is any optimal solution; note that this term refers only to the objective value of the leader, and does not consider the follower.

\section{Bilevel Formulation for Pebbling}\label{Sec:pebbling-formulation}
Now we are ready to describe the bilevel program to compute pebbling numbers of graphs, extending the intuition discussed in the previous section. Specifically, we describe the bilevel program used to compute $\pi_S(G, r)$ for a graph $G$, root $r$, and support $S$.
Let us first present an integer linear program to test if a configuration of pebbles $p$ on a graph $G$ is $r$-solvable.

Define the \emph{arc set} of a graph $G$ by $A(G) = \{(u,v) \in V(G) \times V(G): \{u,v\} \in E(G)\}$. Given $v \in V(G)$, define $\delta^-(v) = \{a \in A(G): \exists u \in V(G), a = (u,v)\}$ and $\delta^+(v) = \{a \in A(G): \exists u \in V(G), a = (v,u)\}$.
Let $p$ be a pebbling configuration on $G$.
Define the following integer linear program, denoted $SOL_p(G, r)$, on integer variables $z_a$ for each $a \in A(G)$.
\begin{align*}
    \text{maximize } &\sum_{a \in \delta^-(r)} z_a\\
    \text{subject to } & \sum_{a \in \delta^+(v)} 2z_a \le p(v) + \sum_{a \in \delta^-(v)} z_a & \forall v \in V(G)\\
    & \sum_{a \in \delta^+(r)} z_a \le 0\\
    & z_a \in \Z_{\ge 0} & \forall a \in A(G)
\end{align*}

We use the following definition and known result in the proof of Lemma \ref{Lem:lower-level-equiv}.
\begin{theorem}[Acyclic Orderability Theorem \cite{milans-clark, pebbling-graphs}]
    If $D$ is an acyclic directed multigraph and $p:V(D) \to \Z_{\ge 0}$ is a pebbling configuration on $D$, then the following conditions are equivalent:
    \begin{enumerate}
        \item (Orderable). Some ordering of the edges of $D$ yields a valid sequence of pebbling moves starting from $p$.
        \item (Balance Condition). Every $v \in V(D)$ satisfies $p(v) + \deg_D^-(v) - 2 \deg_D^+(v) \ge 0$.
    \end{enumerate}
\end{theorem}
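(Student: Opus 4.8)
The plan is to prove the two implications separately. The direction $1 \Rightarrow 2$ is a conservation argument: suppose $e_1,\dots,e_m$ is an ordering of $E(D)$ that yields a valid pebbling sequence $p = q_0 \to q_1 \to \cdots \to q_m$. Since each edge of $D$ is used exactly once, over the whole sequence a vertex $v$ gains one pebble for each of its $\deg_D^-(v)$ in-edges and loses two pebbles for each of its $\deg_D^+(v)$ out-edges, so $q_m(v) = p(v) + \deg_D^-(v) - 2\deg_D^+(v)$. Because $q_m$ is an honest pebbling configuration, $q_m(v) \ge 0$, which is exactly the balance condition at $v$. Note this direction uses neither acyclicity nor anything about $q_m$ beyond nonnegativity.

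For $2 \Rightarrow 1$ I would induct on $m = |E(D)|$, the point being to identify an out-edge that can legally be fired first while leaving the balance condition intact for the smaller multigraph. The base case $m = 0$ is the empty sequence. For $m \ge 1$, consider the nonempty set $U = \{v : \deg_D^+(v) \ge 1\}$ and fix a topological ordering of $D$ (available since $D$ is acyclic); let $u$ be the topologically-first vertex of $U$. Any in-neighbor of $u$ lies in $U$ and precedes $u$ topologically, contradicting the choice of $u$; hence $\deg_D^-(u) = 0$, and the balance condition at $u$ reads $p(u) - 2\deg_D^+(u) \ge 0$, so $p(u) \ge 2\deg_D^+(u) \ge 2$. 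Pick any out-edge $e = (u,v)$ of $u$; firing it is legal and produces $p'$ with $p'(u) = p(u) - 2$, $p'(v) = p(v) + 1$, and $p'$ equal to $p$ elsewhere.

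It then remains to check that $(D - e, p')$ still satisfies the balance condition, so the induction hypothesis applies ($D - e$ is acyclic with $m - 1$ edges). The quantity $r_w := p(w) + \deg_D^-(w) - 2\deg_D^+(w)$ changes only at $w = u$ (where $p$ drops by $2$ and $\deg^+$ drops by $1$) and at $w = v$ (where $p$ rises by $1$ and $\deg^-$ drops by $1$); in both cases the net change is $0$, and $r_w$ is unchanged for every other $w$, so all inequalities $r_w \ge 0$ persist. By induction there is a valid ordering of $E(D - e)$ starting from $p'$; prepending $e$ yields a valid ordering of $E(D)$ starting from $p$, completing the induction.

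The crux is the selection of the first edge to fire: the insight that makes it work is that acyclicity forces the topologically-earliest vertex of positive out-degree to be a source, and the balance condition at a source immediately supplies the two pebbles needed to fire one of its out-edges. Everything else is bookkeeping — in particular one should keep in mind that ``a valid sequence of pebbling moves starting from $p$'' here means using precisely the edges of $D$, each exactly once, in the chosen order, with the legality check (at least two pebbles on the tail) made against the running configuration at each step.
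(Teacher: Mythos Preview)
The paper does not include a proof of this statement: the Acyclic Orderability Theorem is quoted as a known result with citations to \cite{milans-clark, pebbling-graphs} and is used as a black box in the proof of Lemma~\ref{Lem:lower-level-equiv}. So there is no ``paper's own proof'' to compare against.

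That said, your argument is correct. The $1\Rightarrow 2$ direction is exactly the bookkeeping observation that after all edges have been fired the resulting configuration at $v$ equals $p(v)+\deg_D^-(v)-2\deg_D^+(v)$, which must be nonnegative. For $2\Rightarrow 1$, your key step---picking the topologically earliest vertex $u$ of positive out-degree---is sound: any in-neighbor of $u$ would have positive out-degree and precede $u$ in the topological order, so $\deg_D^-(u)=0$, and then the balance condition at $u$ forces $p(u)\ge 2$. Firing any out-edge of $u$ preserves the balance quantity $p(w)+\deg^-(w)-2\deg^+(w)$ at every vertex (the two affected vertices each see net change zero), so the induction goes through cleanly. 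This is essentially the standard proof one finds in the cited sources.
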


Throughout the rest of this section, we assume that the specified root is not in the support of the pebbling configurations. It suffices to consider only these cases since
$\pi_k(G, r) = \max_{r \notin S \in \binom{V(G)}{k}} \pi_S(G, r)$ for all graphs $G$, roots $r \in V(G)$, and $1 \le k \le |V(G)|-1$.

\begin{lemma}\label{Lem:lower-level-equiv}
    If $r \in V(G)$ and $p$ is a pebbling configuration on $G$ with $r \notin \Supp(p)$, then $p$ is $r$-solvable if and only if the optimal objective value of $SOL_p(G, r)$ is nonzero.
\end{lemma}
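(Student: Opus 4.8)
The plan is to establish both directions by relating pebbling move sequences to integer flows $z$ on the arc set $A(G)$, where $z_a$ counts the number of times arc $a$ is traversed. First I would note that the constraint $\sum_{a \in \delta^+(v)} 2z_a \le p(v) + \sum_{a \in \delta^-(v)} z_a$ in $SOL_p(G,r)$ is precisely a ``net supply is nonnegative'' condition: the pebbles consumed at $v$ by outgoing moves (two per move) cannot exceed the pebbles initially present plus those delivered by incoming moves. The extra constraint $\sum_{a \in \delta^+(r)} z_a \le 0$ forces no moves out of $r$, so that $\sum_{a \in \delta^-(r)} z_a$ genuinely records pebbles accumulated on $r$. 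Thus a feasible $z$ with nonzero objective should correspond to a solvable configuration, and conversely.

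For the \emph{if} direction, suppose $z$ is feasible for $SOL_p(G,r)$ with $\sum_{a \in \delta^-(r)} z_a \ge 1$. I would build a directed multigraph $D$ on $V(G)$ that includes $z_a$ parallel copies of each arc $a$, and argue we may take $D$ acyclic: given any directed cycle in the support of $z$, decrementing $z_a$ by $1$ along each arc of the cycle preserves every constraint (each vertex on the cycle loses one incoming and one outgoing unit, so both sides of its inequality drop by the same amount, or by $2$ versus $1$ in its favor) and does not decrease the objective (if $r$ lies on the cycle it would have an outgoing arc, contradicting $\sum_{a\in\delta^+(r)} z_a \le 0$, so $r$ is not on any cycle and $\sum_{a\in\delta^-(r)} z_a$ is unchanged). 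Iterating yields an acyclic $D$ on which the balance condition $p(v) + \deg_D^-(v) - 2\deg_D^+(v) \ge 0$ holds for every $v$ — this is exactly the first family of constraints. By the Acyclic Orderability Theorem, the edges of $D$ can be ordered into a valid pebbling sequence from $p$; after executing it, vertex $r$ has received $\deg_D^-(r) = \sum_{a\in\delta^-(r)} z_a \ge 1$ pebbles and sent out none, so $p$ is $r$-solvable.

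For the \emph{only if} direction, suppose $p = p_0 \to p_1 \to \cdots \to p_k$ with $p_k(r) \ge 1$. I would like to extract a flow $z$, but a raw move sequence need not be acyclic; however, a standard reduction lets us assume it is. Concretely, I'd invoke (or reprove in one line) the ``no-cycle lemma'': if a move sequence traverses a directed cycle, one can shorten it to a sequence still reaching $r$ using a multiset of moves that forms an acyclic multigraph — here is where I'd be careful, since this is the one genuinely nontrivial step and the main obstacle. Once we have an acyclic solving sequence, let $D$ be the multigraph of moves used and set $z_a = \deg_D$-multiplicity of $a$. The balance condition (which holds because the sequence is orderable, by the Acyclic Orderability Theorem again) gives the first constraints; since $p_k(r) \ge 1$ and $r$ is only a sink in an optimal such sequence (any move out of $r$ can be deleted along with a matching move in, as $r \notin \Supp(p)$), we get $\sum_{a\in\delta^+(r)} z_a = 0$ and $\sum_{a\in\delta^-(r)} z_a \ge p_k(r) \ge 1$. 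Hence $z$ is feasible for $SOL_p(G,r)$ with positive objective, and since the objective is a maximization, the optimal value is nonzero as well. The crux throughout is the interplay between acyclicity and the balance condition; modulo the no-cycle normalization, everything else is bookkeeping against the two constraint families.
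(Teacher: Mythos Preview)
Your proposal is correct and follows essentially the same route as the paper: both directions hinge on the Acyclic Orderability Theorem, with cycle removal used to reduce a feasible flow (or a solving move sequence) to an acyclic multigraph on which the balance condition coincides with the first family of constraints in $SOL_p(G,r)$. The paper's write-up is nearly identical in structure, including the observation that $r$ lies on no cycle because $\sum_{a\in\delta^+(r)} z_a \le 0$, so arcs into $r$ survive cycle deletion.
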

\begin{proof}
    Suppose $p$ is $r$-solvable.
    Let $(u_1, v_1), (u_2, v_2), \dots, (u_t, v_t)$ be a sequence of directed edges such that pebbling moves starting at $p$ along these edges results in a configurations $p'$ in which $p'(r)$ is maximized (so $p'(r) > 0$); this sequence is nonempty because $r \notin \Supp(p)$. Let $D$ be a directed multigraph with vertex set $V(G)$ and edge multiset $\{(u_1, v_1), \dots, (u_t, v_t)\}$. Without loss of generality, we may assume that $D$ is acyclic. Since $D$ is acyclic and orderable under $p$ (with ordering $(u_1, v_1), (u_2, v_2), \dots, (u_t, v_t)$), the Acyclic Orderability Theorem implies that $p(v) + \deg^-_D(v) - 2\deg^+_D(v) \ge 0$. Thus if we define $z_a$ to be the number of times a pebble is sent across $a \in A(G)$, it follows that $(z_a)_{a \in A(G)}$ is a feasible solution of $SOL_p(G, r)$ with objective value equal to $p'(r)$.

    Conversely, let $z \in \Z_{\ge 0}^{A(G)}$ be an optimal solution of $SOL_p(G, r)$. Suppose that $\sum_{a \in \delta^-(r)} z_a > 0$. Construct a directed multigraph $D$ by vertex set $V(G)$ and edge set $\{a \in A(G): z_a > 0\}$, where the multiplicity of each $a \in A(G)$ is exactly $z_a$. Note that $\deg^+(r) = 0$ since $\sum_{a \in \delta^+(r)} z_a \le 0$ (which holds by feasibility). Thus $r$ and all of the arcs in $\delta^-(r)$ are not in any directed cycles of $D$.
    Consider iteratively removing the edges in any directed cycles in $D$ until no more are left; let the resulting sequence of directed multigraphs be $D = D_1, D_2, \dots, D_t = D'$. For each $i \in [t]$ and $uv \in A(G)$, let $z^i_{uv}$ be the multiplicity of edge $uv$ in $D_i$ (possibly $0$). Observe that for all $v \in V(G)$, we have $p(v) + \deg_{D_1}^-(v) - 2\deg_{D_1}^+(v) = p(v) + \sum_{a \in \delta^-(v)} z^1(a) - \sum_{a \in \delta^+(v)} 2z^1(a) \ge 0$ since $z^1 = z$ and $z$ is feasible. Suppose $i \in [t]$ with $i \ge 2$ and $v \in V(G)$. If $v$ is not a vertex in the $(i-1)$st deleted cycle, then we have $p(v) + \deg_{D_i}^-(v) - 2\deg_{D_i}^+(v) = p(v) + \deg_{D_{i-1}}^-(v) - 2\deg_{D_{i-1}}^+(v)$; otherwise, exactly one edge in each of $\delta^-(v)$ and $\delta^+(v)$ is deleted, so that
    \begin{align*}
        p(v) + \deg_{D_i}^-(v) - 2\deg_{D_i}^+(v) &= p(v) + (\deg_{D_{i-1}}^-(v) - 1) - 2(\deg_{D_{i-1}}^+(v) - 1)\\
        &\ge p(v) + \deg_{D_{i-1}}^-(v) - 2\deg_{D_{i-1}}^+(v).
    \end{align*}
    It follows by induction that $p(v) + \deg_{D_i}^-(v) - 2\deg_{D_i}^+(v) \ge 0$ for all $i \in [t]$ and $v \in V(G)$. In particular, this holds for $D' = D_t$. Since $D'$ is acyclic by construction, $D'$ is orderable by the Acyclic Orderability Theorem, so that some total ordering of the arcs in $D'$ is a valid sequence of pebbling moves. Since none of the arcs in $\delta^-(r)$ are in a cycle of $D = D_1$, these arcs are never deleted in the construction of $D'$, so that $\deg_{D'}^-(r) = \deg_{D}^-(r) = \sum_{a \in \delta^-(r)} z_a > 0$. This shows that in the sequence of pebbling moves given by the orderability of $D'$ under $p$, some pebble is moved across an arc in $\delta^-(r)$. Hence $p$ is $r$-solvable.
\end{proof}

We are now able to construct a bilevel integer linear program to compute $\pi_S(G, r)$ for any $r \notin S \subseteq V(G)$. In the following definition, we abuse notation slightly, by writing $(y_v)_{v \in S}$ for the vector $\bar{y}$ indexed by $V(G)$ such that $\bar{y}_v = y_v$ for $v \in S$ and $\bar{y}_v = 0$ for $v \notin S$.

\begin{definition}
    Let $G$ be a graph, $r \in V(G)$, and $S \subseteq V(G)$ with $r \notin S$. Define $PI_S(G, r)$ as
    \begin{align}
        \text{maximize } &\sum_{v \in S} y_v\\
        \text{subject to } & y_v \in \Z_{\ge 0} & \forall v \in S\\
        & \sum_{v \in S} y_v \ge |V(G)|\\
        & x \in \argmax ( SOL_{(y_v)_{v \in S}}(G, r) )\\
        & \sum_{a \in \delta^-(r)} x_a \le 0
    \end{align}
\end{definition}
In the above definition, each variable $y_v$ represents the number of initial pebbles on vertex $v$, so that $(y_v)_{v \in S}$ represents an initial pebbling configuration. The variables $x \in \Z_{\ge 0}^{A(G)}$ represent an optimal flow of pebbles to the root vertex $r$, which is enforced by (4). Constraint (5) requires that the final number of pebbles on the root is 0 in this optimal flow. Thus feasible solutions $(x, y)$ correspond to $r$-unsolvable initial configurations whose support is a subset of $S$. Constraint (3) is the Class 0 infeasibility cut described in the introduction. Finally, the objective function signifies that we are computing the maximum size of an $r$-unsolvable configuration. 

Observe that if it is known that $L \le \pi_S(G, r) - 1 \le U$ for some lower and upper bounds $L$ and $U$ (recall that $\pi_S(G, r) - 1$ is the maximum size of an $r$-unsolvable configuration with support a subset of $S$), then we may add the cuts $\sum_{v \in S} y_v \ge L$ and $\sum_{v \in S} y_v \le U$; in particular, we may use any known upper bounds on $\pi(G)$ or $\pi(G, r)$.

\begin{theorem}\label{Thm:bilevel-equivalence}
    Let $r \in V(G)$ and $S \subseteq V(G)$ with $r \notin S$.
    If $PI_S(G, r)$ is infeasible, then $\pi_S(G, r) \le |V(G)|$. Otherwise, the optimal objective value of $PI_S(G, r)$ is precisely $\pi_S(G, r)-1$.
\end{theorem}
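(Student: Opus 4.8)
The plan is to read the theorem directly off a correspondence between feasible solutions of $PI_S(G,r)$ and $r$-unsolvable configurations supported on $S$ of size at least $|V(G)|$, with the leader objective recording the size of the configuration. The substantive work is already done in Lemma \ref{Lem:lower-level-equiv}; what remains is to unpack the bilevel constraints and combine them with some elementary facts about $\pi_S$.

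First I would record two preliminaries. (i) For any configuration $p$, the program $SOL_p(G,r)$ is feasible (take $z \equiv 0$) and bounded above (summing the balance constraint over all $v \in V(G)$ gives $\sum_a z_a \le |p|$), so its $\argmax$ is nonempty and its optimal objective value is a nonnegative integer. (ii) Solvability is monotone: if $p \le q$ pointwise and $p$ is $r$-solvable, so is $q$. Together with the definition of $\pi_S$ this yields that every configuration of size at least $\pi_S(G,r)$ with support $\subseteq S$ is $r$-solvable, and that there is an $r$-unsolvable one of size exactly $\pi_S(G,r)-1$; i.e. $\pi_S(G,r)-1$ is the largest size of an $r$-unsolvable configuration with support $\subseteq S$.

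Next I would unpack feasibility of $PI_S(G,r)$. Given a feasible $(x,y)$, set $p = (y_v)_{v\in S}$; constraints (2)--(3) say precisely that $\Supp(p)\subseteq S$ and $|p|\ge|V(G)|$. Constraint (4) says $x$ is optimal for $SOL_p(G,r)$, and since each $x_a \ge 0$, constraint (5) forces $\sum_{a\in\delta^-(r)} x_a = 0$, so the optimal value of $SOL_p(G,r)$ is $0$; by Lemma \ref{Lem:lower-level-equiv}, $p$ is $r$-unsolvable. Conversely, if $p$ is $r$-unsolvable with $\Supp(p)\subseteq S$ and $|p|\ge|V(G)|$, then Lemma \ref{Lem:lower-level-equiv} gives optimal value $0$ for $SOL_p(G,r)$, so every optimal $x$ satisfies $\sum_{a\in\delta^-(r)}x_a=0\le 0$ and $(x,(p(v))_{v\in S})$ is feasible. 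Note this also makes the optimistic/pessimistic distinction irrelevant here: when $p$ is unsolvable, every optimal follower response satisfies (5). In both directions the leader objective $\sum_{v\in S} y_v$ equals $|p|$, so the set of attainable leader-objective values is exactly $\{\,|p| : p \text{ is } r\text{-unsolvable},\ \Supp(p)\subseteq S,\ |p|\ge|V(G)|\,\}$.

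Finally I would conclude. If $PI_S(G,r)$ is infeasible, this set is empty, so every configuration of size at least $|V(G)|$ with support $\subseteq S$ is $r$-solvable, whence $\pi_S(G,r)\le|V(G)|$. If it is feasible, the maximum of $|p|$ over all $r$-unsolvable $p$ with support $\subseteq S$ is $\pi_S(G,r)-1$ by preliminary (ii), and feasibility forces a witness of size $\ge|V(G)|$, so the extra lower bound in the attainable set does not change the maximum; hence the optimal objective value of $PI_S(G,r)$ equals $\pi_S(G,r)-1$. I do not anticipate a serious obstacle: the only delicate points are the bilevel semantics (resolved by the observation that a zero optimal value of $SOL_p$ makes (5) hold for every optimal follower response) and the routine monotonicity argument identifying $\pi_S(G,r)-1$ with the largest unsolvable size.
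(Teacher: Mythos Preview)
Your proposal is correct and follows essentially the same approach as the paper: establish via Lemma~\ref{Lem:lower-level-equiv} that feasible solutions of $PI_S(G,r)$ correspond exactly to $r$-unsolvable configurations with support in $S$ and size at least $|V(G)|$, then read off both cases. You supply considerably more detail than the paper's two-sentence argument---in particular the nonemptiness and boundedness of the $\argmax$, the monotonicity of solvability, and the observation that the optimistic/pessimistic bilevel semantics coincide here---but the route is the same.
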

\begin{proof}
    Since feasible solutions correspond to $r$-unsolvable configurations (by Lemma \ref{Lem:lower-level-equiv}), if $PI_S(G, r)$ is infeasible, then constraint (3) implies that there are no $r$-unsolvable configurations with support that is a subset of $S$ having size at least $|V(G)|$, showing that $\pi_S(G, r) \le |V(G)|$.
    If $PI_S(G, r)$ is feasible, then
    the optimal objective value of $PI_S(G, r)$ is precisely the largest size of an $r$-unsolvable configuration with support that is a subset of $S$, which is $\pi_S(G, r) - 1$.
\end{proof}

\section{Support-\textit{k} Pebbling Number Algorithm}\label{Sec:Alg}
In this section, we describe the algorithm for computing $\pi_k(G)$ for a graph $G$ and support size $k$, which uses the bilevel programming approach described in the previous section at its core. We conclude this section with the proof of Theorem \ref{Thm:main}.

\subsection{Symmetric Computations}\label{Subsec:symmetry}
The na\"ive way of computing $\pi_k(G) = \max_{r \in V(G)} \pi_k(G, r)$ is to compute $\pi_k(G, r)$ for all possible roots $r$, and take the maximum. However, we use the following observations as a computational optimization. 
\begin{observation}\label{Obs:root-isomorphism}
    Let $r_1, r_2 \in V(G)$.
    If there is an automorphism $\phi \in \Aut(G)$ such that $\phi(r_1) = r_2$, then $\pi_k(G, r_1) = \pi_k(G, r_2)$.
\end{observation}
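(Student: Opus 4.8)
The plan is to show that an automorphism $\phi$ of $G$ with $\phi(r_1) = r_2$ induces a size-preserving, support-size-preserving, and solvability-preserving bijection on pebbling configurations, from which the equality of the rooted support-$k$ pebbling numbers follows immediately. First I would recall that $\phi$, being a graph automorphism, is in particular a bijection $V(G) \to V(G)$ that preserves adjacency; hence it also induces a bijection $A(G) \to A(G)$ on arcs via $(u,v) \mapsto (\phi(u), \phi(v))$, and it maps $\delta^-(v)$ onto $\delta^-(\phi(v))$ and $\delta^+(v)$ onto $\delta^+(\phi(v))$.

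Next I would define, for any pebbling configuration $p$ on $G$, the pushed-forward configuration $p^\phi := p \circ \phi^{-1}$, i.e.\ $p^\phi(v) = p(\phi^{-1}(v))$. The key routine observations are: (i) $|p^\phi| = |p|$, since $\phi$ merely permutes the vertices; (ii) $\Supp(p^\phi) = \phi(\Supp(p))$, so $|\Supp(p^\phi)| = |\Supp(p)|$ and $\Supp(p) \subseteq S$ iff $\Supp(p^\phi) \subseteq \phi(S)$; and (iii) a single pebbling move on $p$ along an arc $(u,v)$ corresponds exactly to a pebbling move on $p^\phi$ along $(\phi(u), \phi(v))$, so by induction on the length of a move sequence, $p$ is $r_1$-solvable if and only if $p^\phi$ is $\phi(r_1)$-solvable, i.e.\ $r_2$-solvable. (Alternatively, one could invoke Lemma~\ref{Lem:lower-level-equiv} and check that $\phi$ carries a feasible solution of $SOL_p(G, r_1)$ to one of $SOL_{p^\phi}(G, r_2)$ with the same objective value, since the constraint system is literally relabeled by $\phi$; but the direct move-sequence argument is cleaner.)

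With this in hand, I would argue as follows. Fix $k$. Given any $S \in \binom{V(G)}{k}$ with $r_1 \notin S$, the map $p \mapsto p^\phi$ is a bijection between $\{p : \Supp(p) \subseteq S,\ |p| = m\}$ and $\{q : \Supp(q) \subseteq \phi(S),\ |q| = m\}$ that preserves $r_1$-solvability versus $r_2$-solvability; hence $\pi_S(G, r_1) = \pi_{\phi(S)}(G, r_2)$. Since $\phi$ is a bijection on $V(G)$ fixing the constraint $r_i \notin S$ in the sense that $r_1 \notin S \iff r_2 \notin \phi(S)$, as $S$ ranges over all $k$-subsets avoiding $r_1$, $\phi(S)$ ranges over all $k$-subsets avoiding $r_2$. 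Taking the maximum over $S$ on both sides and using the identity $\pi_k(G, r) = \max_{r \notin S \in \binom{V(G)}{k}} \pi_S(G, r)$ noted in the excerpt yields $\pi_k(G, r_1) = \pi_k(G, r_2)$.

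There is no real obstacle here; the statement is a straightforward symmetry argument. The only point requiring a modicum of care is being explicit that $\phi$ acts compatibly on all the relevant structures simultaneously --- vertices, arcs, in/out-neighborhoods, supports, sizes, and move sequences --- so that "relabel everything by $\phi$" is a legitimate move; once that bookkeeping is set up, the claim is immediate. I would keep the write-up short, stating the three bullet properties of $p \mapsto p^\phi$ and then the one-line deduction.
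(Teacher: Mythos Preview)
Your argument is correct and is exactly the natural one; the paper itself provides no proof of this Observation, treating it as self-evident, so there is nothing to compare against beyond noting that your write-up spells out the standard relabel-by-$\phi$ bookkeeping the paper leaves implicit.
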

Hence we only need to compute $\pi_k(G, r)$ for a root $r$ in each automorphism orbit of $G$. 

Recall that $\calL$ is the set of 8-vertex Lemke graphs. Let $L = L_1 \in \calL$ be the following minimal Lemke graph in Figure \ref{fig:lemke}, which was the first one discovered \cite{chung}, and hence it has received more attention than the others. We focus on the square of this graph to get a sense for the reduced computational work, though the same algorithm outlined below can be used for the others as well.

\begin{figure}[ht]
\centering
\begin{tikzpicture}[scale=2, every node/.style={circle, inner sep=2pt}]

\node[shape=circle,draw=black] (1) at (0,0) {$v_1$};
\node[shape=circle,draw=black] (2) at (1,0) {$v_2$};
\node[shape=circle,draw=black] (3) at (1,1) {$v_3$};
\node[shape=circle,draw=black] (4) at (1.5, 2) {$v_4$};
\node[shape=circle,draw=black] (5) at (.5,3) {$v_5$};
\node[shape=circle,draw=black] (6) at (-.5,2) {$v_6$};
\node[shape=circle,draw=black] (7) at (0,1) {$v_7$};
\node[shape=circle,draw=black] (8) at (.5, 1.9) {$v_8$};

\draw (1) -- (2);
\draw (2) -- (3);
\draw (3) -- (4);
\draw (4) -- (5);
\draw (5) -- (6);
\draw (6) -- (7);
\draw (7) -- (8);
\draw (3) -- (6);
\draw (3) -- (8);
\draw (4) -- (7);
\draw (5) -- (7);
\draw (5) -- (8);
\draw (1) -- (7);

\end{tikzpicture}
\caption{The original Lemke graph $L = L_1$.}
\label{fig:lemke}
\end{figure}
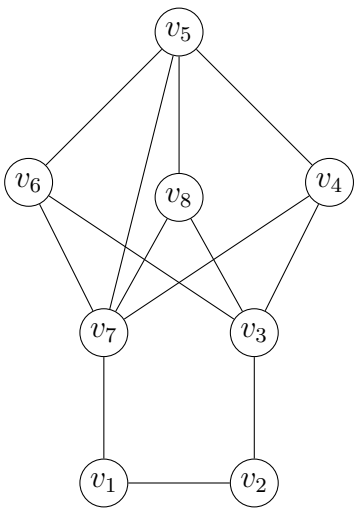

Obtaining graph automorphism orbits from \texttt{SageMath} \cite{sage}, for $L \sq L$, we obtain a reduction from checking 64 roots, down to 21 roots, partly because of the ``slice'' symmetry of product graphs.

Next, we use symmetry to reduce the number of bilevel integer programs needed to run for computing $\pi_k(G, r)$. Na\"ively, we must compute $\pi_S(G, r)$ for all $S \subseteq V(G) \setminus \{r\}$ with $|S| = k$, resulting in $\binom{|V(G)|-1}{k}$ runs of $PI_S(G, r)$. We can reduce this number by applying the following observation.
\begin{observation}\label{Obs:support-isomorphism}
    Let $S_1, S_2 \in \binom{V(G) \setminus \{r\}}{k}$. If there is an automorphism $\phi \in \Aut(G)$ such that $\phi(S_1) = S_2$ and $\phi(r) = r$, then $\pi_{S_1}(G, r) = \pi_{S_2}(G, r)$.
\end{observation}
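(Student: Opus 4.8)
The plan is to show that the automorphism $\phi$ induces a size-preserving bijection between the $r$-unsolvable configurations supported on (a subset of) $S_1$ and those supported on (a subset of) $S_2$, and then read off the equality $\pi_{S_1}(G,r)=\pi_{S_2}(G,r)$ from the definition of $\pi_S$. The guiding principle is that a graph automorphism fixing the root is an isomorphism of pebbling games.

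First I would record the elementary transport fact. For a configuration $p$ on $G$, set $\phi_* p := p \circ \phi^{-1}$; then $|\phi_* p| = |p|$ and $\Supp(\phi_* p) = \phi(\Supp(p))$. If $p \to p'$ is the pebbling move that removes two pebbles from $u$ and places one on an adjacent vertex $w$, then $\{\phi(u),\phi(w)\} \in E(G)$ since $\phi \in \Aut(G)$, so $\phi_* p \to \phi_* p'$ is again a legal pebbling move. Consequently any sequence $p = p_0 \to p_1 \to \cdots \to p_t$ is carried to a legal sequence $\phi_* p_0 \to \phi_* p_1 \to \cdots \to \phi_* p_t$, and $(\phi_* p_t)(r) = p_t(\phi^{-1}(r)) = p_t(r)$ because $\phi(r) = r$. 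Applying this to $\phi$ and, symmetrically, to $\phi^{-1}$ (which also fixes $r$) shows that $p$ is $r$-solvable if and only if $\phi_* p$ is $r$-solvable.

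Next, since $\phi(S_1) = S_2$, the map $p \mapsto \phi_* p$ takes configurations with $\Supp(p) \subseteq S_1$ to configurations with $\Supp(\phi_* p) \subseteq S_2$, and it is a bijection between these sets with inverse $q \mapsto q \circ \phi$; by the previous paragraph it restricts to a size-preserving bijection between the $r$-unsolvable members on each side. Hence
\[
\sup\{\,|p| : \Supp(p) \subseteq S_1,\ p \text{ is } r\text{-unsolvable}\,\} = \sup\{\,|q| : \Supp(q) \subseteq S_2,\ q \text{ is } r\text{-unsolvable}\,\}.
\]
Each supremum is finite (bounded by $\pi(G,r)-1$) and attained, and $\pi_{S_i}(G,r)$ is exactly one more than it: any configuration of larger size with support in $S_i$ is $r$-solvable, while an $r$-unsolvable witness of that size has support in $S_i$. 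Therefore $\pi_{S_1}(G,r) = \pi_{S_2}(G,r)$.

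I do not expect a genuine obstacle here; the only thing to be careful about is aligning the ``support lies in $S$'' constraints under the bijection, which is precisely what the hypotheses $\phi(r)=r$ and $\phi(S_1)=S_2$ guarantee. (Dropping the condition $\phi(S_1)=S_2$ and keeping only $\phi(r)=r$, the same argument yields Observation~\ref{Obs:root-isomorphism}.)
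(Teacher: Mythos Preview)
Your argument is correct and is exactly the natural one: push configurations forward along the automorphism, observe that this preserves size, support (up to $\phi$), and $r$-solvability because $\phi$ fixes $r$, and conclude that the maximum size of an $r$-unsolvable configuration supported in $S_1$ equals that for $S_2$. The paper itself offers no proof---it records the statement as an evident observation---so there is nothing to compare against; what you wrote is the intended justification spelled out in full.

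One small inaccuracy in your closing parenthetical: Observation~\ref{Obs:root-isomorphism} concerns an automorphism with $\phi(r_1)=r_2$ (two distinct roots), not one fixing a single root $r$. The adaptation is not ``drop $\phi(S_1)=S_2$ and keep $\phi(r)=r$''; rather, one uses $\phi$ to carry $r_1$-(un)solvable configurations to $r_2$-(un)solvable ones and then takes the maximum over all $k$-subsets on each side. This does not affect the proof of the statement at hand.
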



Using both of these optimizations, we have to run $1,880,808$ different bilevel integer programs to compute $\pi_4(L \sq L)$. Compared to the na\"ive method of computing $|V(L \sq L)| \cdot \binom{|V(L \sq L)| - 1}{4} = 38,122,560$ different bilevel programs, these optimizations alone result in a speedup factor of 20.27 times.


\subsection{Covering Designs}\label{Subsec:coverings}
In our actual implementation, we integrate symmetry with the notion of a \emph{covering design}. Classically, a $(n,k,t)$-covering design is a family $\mathcal{B} \subseteq \binom{[n]}{k}$ such that every $t$-element set $S \subseteq [n]$ is contained within one member-set of $\mathcal{B}$ \cite{covering-designs-2, covering-designs-1, covering-designs-3}. We use a weakening of this concept, only requiring that the ``non-isomorphic'' sets are contained in a set in $\mathcal{B}$.

\begin{definition}
    If $n \ge k \ge 1$ and $\calF \subseteq \calP(X)$, where $X$ is finite, then a \emph{$(X,k,\mathcal{F})$-covering design} is a family $\mathcal{B} \subseteq \binom{X}{\le k}$ such that every $S \in \calF$ is contained within one member-set of $\mathcal{B}$.
\end{definition}

Let $r \in V(G)$. For $S, T \in \binom{V(G) \setminus \{r\}}{k}$, we write $S \cong_{G,r,k} T$ if there exists an automorphism of $G$ that fixes $r$ and maps $S$ onto $T$.
Let $\calF(G, r, k) \subseteq \binom{V(G) \setminus \{r\}}{k}$ be a family of representatives of the equivalence classes under the equivalence relation $\cong_{G,r,k}$; see Observation \ref{Obs:support-isomorphism}.

Let $c \ge k$ be the size of the covering sets.
Our algorithm consists of greedily choosing the member-sets of a family $\mathcal{B}$ by arbitrarily (and maximally) choosing sets of the uncovered sets in $\calF$ so that their union has size at most $c$, which will be added as a member-set of $\calB$; this algorithm is repeated until all the sets of $\calF$ are covered by sets in $\calB$. 
This algorithm results in $\calB$ being a $(V(G)\setminus\{r\}, c, \calF(G, r, k))$-covering design. Specifically, the algorithm is formally described in Algorithm \ref{Alg:covering}. 

\SetKwInput{KwInput}{Input}
\SetKwInput{KwOutput}{Output}

\begin{algorithm}[!ht]
\DontPrintSemicolon
\caption{$\calB(G, r, k; c)$}
\label{Alg:covering}
  
\KwInput{A graph $G$, $r \in V(G)$, $k \in [|V(G)|]$, and $c \ge k$.}
\KwOutput{A $(V(G) \setminus \{r\}, c, \calF(G, r, k))$-covering design.}

$\calF \gets \calF(G, r, k)$

$\calB \gets \emptyset$

\While{$\calF \ne \emptyset$}{

    $S \gets \emptyset$

    \For{$T \in \calF$}{
        
        \If{$|S \cup T| \le c$}{
            $S \gets S \cup T$
        }
    }

    $\calF \gets \{A \in \calF: A \not\subseteq S\}$

    $\calB \gets \calB \cup \{S\}$
}

\Return $\calB$

\end{algorithm}

It should be obvious that Algorithm \ref{Alg:covering} always terminates, since the first set $T$ in line 5 in each iteration of the \textbf{while} loop satisfies $|S \cup T| = |T| = k \le c$, and so this set is contained in the final $S$, showing this set $T$ is removed from $\calF$ in line 8. Now consider $\calB$ in its terminating state. The conditional in line 6 shows that every $S \in \calB$ satisfies $|S| \le c$; since $\calF(G, r, k) \subseteq \binom{V(G) \setminus \{r\}}{k}$, we further have $\calB \subseteq \binom{V(G) \setminus \{r\}}{\le c}$. Finally, for every $T \in \calF(G, r, k)$, since the terminating condition specifies that $\calF = \emptyset$, $T$ is deleted from $\calF$ in some iteration; in this iteration, we must have $T \subseteq S$ by line 8, and $S \in \calB$ by line 9; thus every member of $\calF(G, r, k)$ is covered by a set in $\calB(G, r, k; c)$. It follows that $\calB(G, r, k;c)$ is a $(V(G) \setminus \{r\}, c, \calF(G, r, k))$-covering design.

Thus, instead of computing
$
\pi_k(G, r)
$
with $\max_{S \in \binom{V(G)\setminus \{r\}}{k}} \pi_S(G, r)$ or $\max_{S \in \calF} \pi_S(G, r)$, we use the fact that $\pi_S(G, r) \le \pi_T(G, r)$ when $S \subseteq T$ to obtain the bound
\[
\pi_k(G, r) \le \max_{S \in \calB(G, r, k; c)} \pi_S(G, r).
\]

Using this approach, we obtain our main result, Theorem \ref{Thm:main}.

\begin{proof}[Proof of Theorem \ref{Thm:main}]
    Let $L_1, L_2, L_3 \in \calL$ be the three minimal Lemke graphs on 8 vertices. Since $\pi_4(G) \le \pi_4(H)$ for $G \supseteq H$ with $V(G) = V(H)$, and because each of the three minimal 8-vertex Lemke graph is Class 0 (easy to prove, but can also be obtained using our framework immediately), it follows that the pebbling number of every 8-vertex Lemke graph is 8. Thus it suffices to show that $\pi_4(L' \sq L'') \le 8^2 = 64$ for all $L' = L_i$ and $L'' = L_j$ for $1 \le i \le j \le 3$.

    On each of these six product graphs $G = L' \sq L''$ and each nonisomorphic root $r \in V(G)$ (in the context of Observation \ref{Obs:root-isomorphism}), we verify that $PI_S(G, r)$ is infeasible for all $S \in \calB(G, r, 4; 8)$, discussed in Section \ref{Sec:computations}. Thus, by Theorem \ref{Thm:bilevel-equivalence}, we have $\pi_S(G, r) \le 64$ for all $S \in \calB(G, r, 4; 8)$. Since $\calB(G, r, 4; 8)$ is a $(V(G)\setminus \{r\}, 8, \calF(G, r, 4)$-covering design, every $T \in \calF(G, r, 4)$ is contained in some member $S \in \calB(G, r, 4; 8)$, so that $\pi_T(G, r) \le \pi_S(G, r) \le 64$. By Observation \ref{Obs:support-isomorphism} and the definition of $\calF(G, r, 4)$, it follows that $\pi_4(G, r) \le 64$, and that $\pi_4(G) \le 64$, as desired.
\end{proof}

\section{Computational Discussion}\label{Sec:computations}
In this section, we describe more details regarding the computations at the heart of the proof of Theorem \ref{Thm:main}. 

We parallelize the computation in a straightforward way that is sufficient for our purposes. We use seven distinct machines in parallel, four using an Intel Xeon Processor E5-2620 v4 with 32 CPUs, each running at 2.1 GHz, and 128 GB of memory, while three use an Intel(R) Xeon(R) Gold 6226R with 64 CPUs, each running at 2.9 GHz, with 384 GB of memory; each of the seven systems
have two NUMA nodes.
For each graph $L' \sq L''$ for minimal $L', L'' \in \calL$ as in the proof of Theorem \ref{Thm:main}, we use \texttt{SageMath} to obtain the nonisomorphic roots (the orbits of the automorphism group), and distribute the roots equitably among the seven machines.

Once the work has been dispatched to the machines, we compute the bilevel integer program $PI_S(L' \sq L'', r)$ over all the roots $r$ for the given machine, and all nonisomorphic supports $S$. Some cases may take much longer than others, so we cap the computation time for each instance at 30 minutes before continuing onto the next instance. This resulted in exactly 37 instances that were abandoned throughout the computation of the support-4 pebbling number of all six graphs. For each of these 37 instances, we changed the leader's objective sense to minimization instead of maximization (which is equivalent in terms of feasibility) and reran the instances, which performed very well in these more extreme scenarios. Somewhat surprisingly, changing the objective sense in this way did not seem to affect most cases, only these more extremal instances.

Table \ref{Table:times} summarizes, for each of the six graphs, the number of nonisomorphic roots (which equals the number of automorphism orbits, $|V(G)/\Aut(G)|$), the total number of instances over all nonisomorphic roots ($\sum_{r} |\calB(G, r, 4, 8)|$), the average time to solve each bilevel integer programming instance (denoted $t_{\mathrm{avg}}$), and the total time to complete the computation if done on a single machine (denoted $t_{\mathrm{total}} = t_{\mathrm{avg}} \cdot \sum_{r} |\calB(G, r, 4, 8)|$).

\begin{table}[ht]
\centering
\caption{Summary of computation metrics for the six graphs.}
\label{Table:times}
\begin{tabular}{|c|c|c|c|c|c|}
\hline
\textbf{Graph $G$} & \textbf{$|V(G)/\Aut(G)|$} & \textbf{$\sum_{r} |\calB(G, r, 4, 8)|$} & \textbf{$t_{\mathrm{avg}}$ (s)} & \textbf{$t_{\mathrm{total}}$ (s)} & \textbf{$t_{\mathrm{total}}$ (hrs)}\\ \hline
$L_1 \sq L_1$ & 21 & 121512 & 1.302 & 158192 & 43.94\\ \hline
$L_1 \sq L_2$ & 42 & 435726 & 1.094 & 476630 & 132.40\\ \hline
$L_1 \sq L_3$ & 24 & 226757 & 1.187 & 269146 & 74.76\\ \hline
$L_2 \sq L_2$ & 28 & 417149 & 1.144 & 477169 & 132.55\\ \hline
$L_2 \sq L_3$ & 28 & 421731 & 1.205 & 508041 & 141.12\\ \hline
$L_3 \sq L_3$ & 10 & 115039 & 1.616 & 185857 & 51.63\\ \hline
\end{tabular}
\end{table}

The complete time data from our computations and the code used in the experiments can be found in \cite{GitHub-Wood}.

We also tested our algorithm on a few other graphs as well. For example, we ran the algorithm on the 3-cube, which terminated in 0.233 seconds, and the 4-cube, which terminated in 3.327 seconds. We are also able to run some supports of size 20 for the graph $L_1 \sq L_1$, terminating in less than 2 hours; however, other cases were not able to terminate in the allotted time of 24 hours. Our method was not able to handle the 24-vertex 4th weak Bruhat graph \cite{Hurlbert2010}, nor the 32-vertex 5-cube in the span of several days. Thus it appears that our algorithm is limited by supports of around 20 vertices. 
This algorithm is substantially better than the previous best method for computing pebbling numbers by Cusack, Green, and Powers \cite{lemke-classification-9}, although their method is more reliable, using purely symbolic computation.

It is important to note that the results presented in this paper could be prone to floating point arithmetic error. The bilevel solver we use in this paper is based on CPLEX \cite{cplex}, which is a widely used mixed-integer linear programming (MILP) solver; CPLEX is designed for primarily industry uses, where a small degree of error is not paramount, so that that the speed advantage of floating point arithmetic outweighs the potential error. To our knowledge, there are no bilevel integer linear program solvers that use exact rational arithmetic, hence our choice of solver. 

Nonetheless, there is a foreseeable avenue for verification of our pebbling results. The most promising is to implement known algorithms for solving bilevel integer programs \cite{bilevel-2, bilevel-3, bilevel-solver} using an exact MILP solver such as exact SCIP \cite{exact-SCIP-1,exact-SCIP-2} instead of CPLEX for black-box MILP computations. Exact SCIP is a state-of-the-art exact rational MILP solver. Upon solving a MILP instance, exact SCIP can also produce a certificate of correctness, called a VIPR certificate \cite{vipr}. Thus, a bilevel integer programming solver could be implemented by using exact SCIP to solve each MILP instance; for further verification, we could additionally employ Satisfiability Modulo Theory (SMT) solvers such as cvc5 to verify each of the resulting VIPR certificates, using the transformation described by Wood et al. \cite{smt-for-vipr}. Other similar methods of utilizing SMT solvers to verify MILP instances could also be used \cite{local-configurations}.

Another avenue towards further verification of the results presented in this paper is to use quantified constraint satisfaction problem (QCSP) solvers, which are exact methods of solving some two-player games. There are known methods of expressing general (nonlinear) bilevel programming problems as QCSP instances \cite{bilevel-to-qcsp}, which could then be passed to a QCSP solver such as Quacode \cite{quacode-paper}. 
Exploring any of these paths towards verification would make for interesting future work.

\section{Conclusion}\label{Sec:conclusion}
In this paper, we develop a framework for computing pebbling numbers of graphs using techniques from bilevel optimization in order to overcome a well-established computational intractability. We use this framework, together with symmetry and covering designs, to show that all products of 8-vertex Lemke graphs have a support-4 pebbling number of at most 64, providing significant evidence that Graham's conjecture does indeed hold on these ``most likely counterexamples.''

There are a few avenues for further exploration. Is it possible to partially exploit the 2-pebbling property to reduce computational work, as is done in \cite{kenter-computing-bounds}? We believe that the bilevel program $PI_S(G,r)$ can be modified slightly to handle cases where the support is \emph{at least} a certain large size. Thus, it would be insightful to obtain a nontrivial upper bound on the support size of a counterexample to Graham's conjecture for any of the Lemke graph products considered in this paper, provided that a counterexample exists. Exploring either of these possibilities further would pave the way for a better understanding of graph pebbling.

\bibliographystyle{plain}
\bibliography{references}

\end{document}